\newcommand{\bigpa}[1]{\bigl(#1\bigr)}
\newcommand{\Bigpa}[1]{\Bigl(#1\Bigr)}
\newcommand{\biggpa}[1]{\biggl(#1\biggr)}
\newcommand{\brc}[1]{\{#1\}}
\newcommand{\Int}{\mathbb{Z}}
\begin{document}

\title{The Chv\'atal--Sankoff problem: 
Understanding random string comparison through stochastic processes}

\author{Alexander Tiskin%
\footnote{This work was supported by the Russian Science Foundation under grant no.\ 22-21-00669, 
\protect\url{https://www.rscf.ru/en/project/22-21-00669}}}

\date{Department of Mathematics and Computer Science\\ St.~Petersburg State University}

\maketitle

\begin{abstract}

Given two equally long, uniformly random binary strings, 
the expected length of their longest common subsequence (LCS)
is asymptotically proportional to the strings' length.
Finding the proportionality coefficient $\gamma$,
i.e.\ the limit of the normalised LCS length for two random binary strings of length $n \to \infty$, 
is a very natural problem, 
first posed by Chv\'atal and Sankoff in 1975, and as yet unresolved.
This problem has relevance to diverse fields ranging from combinatorics and algorithm analysis
to coding theory and computational biology.
Using methods of statistical mechanics,
as well as some existing results on the combinatorial structure of LCS,
we link constant $\gamma$ to the parameters of a certain stochastic particle process,
which we use to obtain a new estimate for $\gamma$%
%
\footnote{
In the preprint version of this paper, 
certain claims were made regarding the nature of this process and the constant $\gamma$,
which subsequently turned out to be incorrect.
The erroneous parts of the preprint are omitted from the paper,
while keeping the partial result on an estimate for $\gamma$ supported by our construction.}.
%

\end{abstract}
 
\allowdisplaybreaks

\newcommand{\colcond}{red!50}
\newcommand{\textcond}[1]{\textcolor{\colcond}{#1}}
\newcommand{\colforced}{blue!50}
\newcommand{\textforced}[1]{\textcolor{\colforced}{#1}}

\newcommand{\sitev}[2]{
\ifthenelse{\equal{#1}{}}{}{
\draw #2 +(0,-0.5) -- +(0,0.5);}
\site{#1}{#2}}

\newcommand{\siteh}[2]{
\ifthenelse{\equal{#1}{}}{}{
\draw #2 +(-0.5,0) -- +(0.5,0);}
\site{#1}{#2}}

\newcommand{\site}[2]{
\ifthenelse{\equal{#1}{0}}{\node at#2[circle,draw,fill=white]{};}{   
\ifthenelse{\equal{#1}{1}}{\node at#2[circle,draw,fill]{};}{         
\ifthenelse{\equal{#1}{x}}{                                          
  \node at#2[circle,draw,fill=white]{};
  \node at#2[circle,draw,fill,scale=0.5]{};}{
\ifthenelse{\equal{#1}{o}}{\node at#2[circle,draw=\colcond,fill=white]{};}{   
\ifthenelse{\equal{#1}{i}}{\node at#2[circle,draw=\colcond,fill=\colcond]{};}{  
\ifthenelse{\equal{#1}{O}}{\node at#2[circle,draw=\colforced,fill=white]{};}{  
\ifthenelse{\equal{#1}{I}}{\node at#2[circle,draw=\colforced,fill=\colforced]{};}{
\ifthenelse{\equal{#1}{B}}{
  \node at#2[fill=white,inner sep=1pt,outer sep=0pt]{\textforced{$\scriptstyle b$}};}{
\ifthenelse{\equal{#1}{C}}{
  \node at#2[fill=white,inner sep=1pt,outer sep=0pt]{\textforced{$\scriptstyle c$}};}{
\ifthenelse{\equal{#1}{.}}{}{
\ifthenelse{\equal{#1}{}}{}{
\node at#2[fill=white,inner sep=1pt,outer sep=0pt]{$\scriptstyle #1$};
}}}}}}}}}}}}

\newcommand{\slot}[2]{
\ifthenelse{\equal{#1}{1}}{\draw[very thick] #2 +(-0.25,0.25) -- +(0.25,-0.25);}{        
\ifthenelse{\equal{#1}{0}}{\draw[very thick] #2 +(-0.25,-0.25) -- +(0.25,0.25);}{        
\ifthenelse{\equal{#1}{*}}{
  \draw[very thick] #2 +(-0.25,0.25) -- +(0.25,-0.25) +(-0.25,-0.25) -- +(0.25,0.25);}{  
\ifthenelse{\equal{#1}{I}}{\draw[very thick,\colforced] #2 +(-0.25,0.25) -- +(0.25,-0.25);}{
\ifthenelse{\equal{#1}{O}}{\draw[very thick,\colforced] #2 +(-0.25,-0.25) -- +(0.25,0.25);}{
\ifthenelse{\equal{#1}{i}}{\draw[very thick,\colcond] #2 +(-0.25,0.25) -- +(0.25,-0.25);}{ 
\ifthenelse{\equal{#1}{o}}{\draw[very thick,\colcond] #2 +(-0.25,-0.25) -- +(0.25,0.25);}{ 
\ifthenelse{\equal{#1}{x}}{
  \draw[very thick,\colcond] #2 +(-0.25,0.25) -- +(0.25,-0.25) +(-0.25,-0.25) -- +(0.25,0.25);}{
\ifthenelse{\equal{#1}{}}{}{
\node at#2[fill=white,inner sep=1pt,outer sep=0pt,opacity=0,text opacity=1]{$\scriptstyle #1$};
}}}}}}}}}}

\newcommand{\animal}[5][]{\setsepchar{:}
\readlist\slots{#2} \readlist\sitesa{#3} \readlist\sitesb{#4} \readlist\sitesc{#5}
\mathord{\begin{tikzpicture}[baseline=(base),x=1em,y=-1em,
  every circle node/.style={draw,thick,inner sep=0pt,outer sep=0pt,minimum size=1.5mm}]
\siteh{\sitesa[1]}{(-0.5,1)} \sitev{\sitesa[2]}{(0,0.5)} 
\siteh{\sitesa[3]}{(0.5,0)}  \sitev{\sitesa[4]}{(1,-0.5)}
\sitev{\sitesb[1]}{(0,1.5)}  \siteh{\sitesb[2]}{(0.5,1)}
\sitev{\sitesb[3]}{(1,0.5)}  \siteh{\sitesb[4]}{(1.5,0)}
\siteh{\sitesc[1]}{(0.5,2)}  \sitev{\sitesc[2]}{(1,1.5)}
\siteh{\sitesc[3]}{(1.5,1)}  \sitev{\sitesc[4]}{(2,0.5)}
\slot{\slots[1]}{(0.5,0.5)} \slot{\slots[2]}{(0.5,1.5)} 
\slot{\slots[3]}{(1.5,0.5)} \slot{\slots[4]}{(1.5,1.5)}
%
\ifthenelse{\equal{#1}{x}}{
  \draw[overlay,ultra thick,black!50] (1,1) +(-0.5,-1.5) -- +(0.5,1.5) +(-1.5,0.5) -- +(1.5,-0.5);}{}
\path (current bounding box.center) coordinate[below=0.5ex](base);
\end{tikzpicture}}}

\newcommand{\zvh}[2][]{%
\setsepchar{:} \readlist\sites{#2}
\animal{#1:::}{:\sites[1]:\sites[2]:}{:::}{:::}}

\newcommand{\zhv}[2][]{%
\setsepchar{:} \readlist\sites{#2}
\animal{#1:::}{:::}{:\sites[1]:\sites[2]:}{:::}}

\newcommand{\zhvh}[2][]{%
\setsepchar{:} \readlist\sites{#2}
\animal{#1:::}{\sites[1]:\sites[2]:\sites[3]:}{:::}{:::}}

\newcommand{\zvhv}[2][]{%
\setsepchar{:} \readlist\sites{#2}
\animal{#1:::}{:\sites[1]:\sites[2]:\sites[3]}{:::}{:::}}

\newcommand{\zhvhv}[2][]{%
\setsepchar{:} \readlist\sites{#2}
\animal{#1:::}{\sites[1]:\sites[2]:\sites[3]:\sites[4]}{:::}{:::}}

\newcommand{\zvhvh}[2][]{%
\setsepchar{:} \readlist\sites{#2}
\animal{#1:::}{:::}{\sites[1]:\sites[2]:\sites[3]:\sites[4]}{:::}}

\newcommand{\cell}[3][]{%
\setsepchar{:} \readlist\sitesaa{#2} \readlist\sitesbb{#3}
\animal{#1:::}{:\sitesaa[1]:\sitesaa[2]:}{:\sitesbb[1]:\sitesbb[2]:}{:::}}

\newcommand{\crab}[3][::::]{\setsepchar{:}
\readlist\slots{#1} \readlist\sitesa{#2} \readlist\sitesb{#3}
\mathord{\begin{tikzpicture}[baseline=(base),x=1em,y=-1em,
  every circle node/.style={draw,thick,inner sep=0pt,outer sep=0pt,minimum size=1.5mm}]
\sitev{\sitesa[1]}{(-1,1.5)} \siteh{\sitesa[2]}{(-0.5,1)} \sitev{\sitesa[3]}{(0,0.5)}
\siteh{\sitesa[4]}{(0.5,0)}  \sitev{\sitesa[5]}{(1,-0.5)} \siteh{\sitesa[6]}{(1.5,-1)}
\siteh{\sitesb[1]}{(-0.5,2)} \sitev{\sitesb[2]}{(0,1.5)}  \siteh{\sitesb[3]}{(0.5,1)}
\sitev{\sitesb[4]}{(1,0.5)}  \siteh{\sitesb[5]}{(1.5,0)}  \sitev{\sitesb[6]}{(2,-0.5)}
\slot{\slots[1]}{(-0.5,1.5)} \slot{\slots[2]}{(0.5,0.5)} \slot{\slots[3]}{(1.5,-0.5)} 
\slot{\slots[4]}{(0.5,1.5)} \slot{\slots[5]}{(1.5,0.5)}
\path (current bounding box.center) coordinate[below=0.5ex](base);
\end{tikzpicture}}}

\renewcommand{\u}{u} \newcommand{\U}{\bar u}
\newcommand{\V}{\bar v}
\newcommand{\w}{w} \newcommand{\W}{\bar w}
\newcommand{\y}{y} \newcommand{\Y}{\bar y}
\newcommand{\z}{z} \newcommand{\Z}{\bar z}

\newcommand{\sub}[1]{
\foreach \k in {#1}{
\ifthenelse{\equal{\k}{0}}{\circ}{
\ifthenelse{\equal{\k}{1}}{\bullet}{\k}
}}}

\newcommand{\pp}[1]{p_{\sub{#1}}}
\newcommand{\PP}[1]{\bar p_{\sub{#1}}}

\def\p{p}
\def\P{\bar p}
\def\q{q}
\def\Q{\bar q}
\def\r{r}
\def\R{\bar r}

\newcommand{\s}{s}
\renewcommand{\S}{\bar s}
\newcommand{\g}{g}
\newcommand{\G}{\bar g}
\newcommand{\h}{h}
\renewcommand{\H}{\bar h}

\newcommand{\cellshape}[1]{
\mathord{\begin{tikzpicture}[baseline=(base),x=0.5em,y=-0.5em]
\foreach \i/\j in {#1}
  \draw (\i,\j) rectangle +(1,1);
\path (current bounding box.center) coordinate[below=0.5ex](base);
\end{tikzpicture}}}

\newcommand{\cellthree}{\cellshape{0/0,0/1,1/0}}
\newcommand{\cellfour}{\cellshape{0/0,0/1,1/0,1/1}}

\newcommand{\eqdef}{\overset{\mathsf{def}}{=}}
\newcommand{\eqjda}{\overset{\mathsf{def}}{=}}
\newcommand{\eqs}{\overset{\mathsf{s}}{=}}
\newcommand{\eqr}{\overset{\mathsf{r}}{=}}
\newcommand{\eqsr}{\overset{\mathsf{sr}}{=}}

\section{Introduction}

The \emph{longest common subsequence (LCS)} for a pair of strings $a$, $b$
is the longest string that is a (not necessarily consecutive) subsequence of both $a$ and $b$.
Given a pair of strings as input, the \emph{LCS problem}
asks for the length of their LCS (finding the actual characters of the LCS is not required).
The LCS problem is a fundamental problem for both theoretical and applied computer science,
and for computational molecular biology; it is also a popular programming exercise.

This paper is concerned with the combinatorics of the LCS problem.
Let strings $a$, $b$ be of length $n$, uniformly random over the binary alphabet.
Chv\'{a}tal and Sankoff \cite{Chvatal_Sankoff:75} (see also \cite[Chapter 1]{Steele:97}) 
have shown that the expected LCS length of $a$, $b$ is asymptotically proportional to $n$.
The \emph{Chv\'{a}tal--Sankoff problem} asks for the proportionality coefficient $\gamma$,
i.e.\ the limit of the normalised expected LCS length $\frac{\mathbb{E} L_n}{n}$ 
as $n \to \infty$, where the random variable $L_n$ is defined as the LCS length for strings of length $n$.
Alexander \cite{Alexander:94} has shown that 
$0 \leq \gamma - \frac{\mathbb{E} L_n}{n} \leq O\bigpa{\bigpa{\frac{\log n}{n}}^{1/2}}$.

The Chv\'{a}tal--Sankoff problem has relevance to diverse fields 
ranging from combinatorics and algorithm analysis 
to coding theory (see e.g.\ Bukh et al.\ \cite{Bukh+:17})
and computational biology (see e.g.\ Pevzner and Waterman \cite{Pevzner_Waterman:95}).
For such a natural and simply posed problem, it seems to be surprisingly elusive:
neither an exact value nor any closed-form expression for $\gamma$ are known,
and the existing lower and upper numerical bounds on $\gamma$ are wide apart.

\paragraph{Acknowledgements}
I thank Gianfranco Bilardi, Chris Cox, Vassily Duzhin, Maria Fedorkina, Sergei Nechaev, 
Georgiy Shulga, Nikolai Vassiliev, and Anatoly Vershik for fruitful discussions.
I thank my colleagues and students at the Department of Mathematics and Computer Science 
of St.~Petersburg University for the stimulating atmosphere.

\section{Related work}

\paragraph{LCS combinatorics}
An important combinatorial feature of the LCS problem, 
also relevant to its computational aspect, 
is the problem's close connection with transposition networks and the Hecke monoid 
(also called the seaweed monoid or the sticky braid monoid).
This connection has been explored over decades from different angles 
and using greatly varying terminology.
In the rest of this paper, we will describe this connection in more detail,
and will use it as the first step on our path to the Chv\'{a}tal--Sankoff problem.

While the computational aspect of the LCS problem is outside the scope of this paper,
it should be mentioned that the problem's computational complexity,
along with that of the closely related edit distance and sequence alignment problems,
has been thoroughly studied and is well-understood.
Seminal work on LCS algorithms and lower bounds includes
e.g.\ \cite{Wagner_Fischer:74,Masek_Paterson:80,Abboud_:15,Bringmann_Kuenemann:18}.

\paragraph{Random LCS on permutation strings}
Apart from binary strings,
a question analogous to the Chv\'{a}tal--Sankoff problem can be asked 
about pairs of uniformly random permutations of the alphabet $\brc{1,\ldots,n}$.
The LCS problem on such permutation strings
is equivalent to finding the longest increasing subsequence (LIS) 
of a single permutation of length $n$.
The LCS (respectively, LIS) length in this case turns out to be asymptotically proportional to $\sqrt{n}$.
The proportionality constant was found to be exactly $2$ in the classical works 
of Vershik and Kerov \cite{Vershik_Kerov:77} and Logan and Shepp \cite{Logan_Shepp:77}
(see also \cite{Romik:14}), as part of a solution for the more general problem 
asking for the limit shape of a random Young diagram sampled from the Plancherel distribution.

\paragraph{Bounds and estimates for $\gamma$}

Chv\'{a}tal and Sankoff  \cite{Chvatal_Sankoff:75} gave the first analysis of the problem,
and proved the existence of the limit $\gamma$.
Properties of the convergence of the normalised LCS length to this limit
were studied since then by numerous researchers.
\Cref{t-bounds} lists some results on specific lower and upper bounds,
as well as experimental numerical estimates of $\gamma$.

\begin{table}\centering
\begin{tabular}{|l|l|l|l|}\hline
Reference & $\gamma > {}$ & $\gamma$ & $\gamma < {}$ \\ \hline
Chv\'{a}tal and Sankoff  \cite{Chvatal_Sankoff:75} &
$0.697844$ & $\approx 0.8082$ & $0.866595$\\
Deken \cite{Deken:79} & 
$0.7615$ & & $0.8575$\\
Steele \cite{Steele:86} (conjecture attr.\ to Arratia) &
& $\stackrel{?}{=} 2(\sqrt2-1) \approx 0.8284$ &\\
Dan\v{c}\'ik \cite{Dancik:94}; Paterson and Dan\v{c}\'ik \cite{Paterson_Dancik:94} &
$0.77391$ & $\approx 0.812$ & $0.83763$ \\
Baeza-Yates et al.\ \cite{BaezaYates+:99} &
& $\approx 0.8118$ &\\
Boutet de Monvel \cite{Boutet:99} & 
& $\approx 0.812282$ &\\
Bundshuh \cite{Bundschuh:01} & 
& $\approx 0.812653$ &\\
Lueker \cite{Lueker:09} &
$0.788071$ & & $0.826280$\\
Bukh and Cox \cite{Bukh_Cox:22} &
& $\approx 0.8122$ &\\ \hline
%
%
\end{tabular}
\caption{\label{t-bounds}Bounds and estimates on $\gamma$}
\end{table}

The best currently known analytic bounds on $\gamma$ are due to Lueker \cite{Lueker:09}.
Despite the ingenious methods of obtaining these bounds and numerous related results,
the gap between the upper and the lower bounds remains quite wide: 
in particular, not a single digit of $\gamma$ after decimal point is known exactly.

\paragraph{Stochastic evolution models}
Due to the combinatorial properties of the LCS problem that will be presented in the next section, 
the Chv\'{a}tal--Sankoff problem turns out to be closely related to the theory of stochastic evolution models,
which is a vast and actively developing field of study.
Particularly relevant areas within this field include particle processes,
random Young diagrams, stochastic cellular automata.
Asymptotic properties of such models are studied with the help of partial differential equations (PDEs),
which describe a model's evolution at the macroscopic level.
In the rest of this paper, we will describe these connections in more detail.

\section{Combinatorics of the LCS problem}

\paragraph{LCS grid}
Let strings $a$, $b$ be of length $m$, $n$ respectively.
The \emph{LCS grid} defined by $a$, $b$ is a directed graph 
on an $(m+1) \times (n+1)$ grid of nodes;
we visualise the nodes as being indexed top-to-bottom and left-to-right.
Every pair of horizontally or vertically adjacent nodes
are connected by an edge, directed rightwards (respectively, downwards).
A pair of diagonally adjacent nodes $(i,j)$, $(i+1,j+1)$, $0 \leq i < m$, $0 \leq j < n$,
are connected by an edge whenever $a_i = b_j$ (the two characters \emph{match}); 
this edge is directed towards below-right.
The LCS grid can also be viewed as an $m \times n$ grid of cells,
each formed by a quadruple of adjacent nodes and their four connecting horizontal and vertical edges.
The cell is called \emph{match cell}, if the two corresponding characters match
(and therefore the cell contains a diagonal edge), otherwise a \emph{mismatch cell}.
The LCS problem is equivalent to asking for the length of a path in the LCS grid
from the top-left node $(0,0)$ to the bottom-right node $(m,n)$,
that maximises the number of diagonal edges along the path.

\begin{example}
\Cref{f-example} (left) shows the LCS grid for a pair of binary strings.
The horizontal and vertical edges are shown in light-blue, and the diagonal edges in solid red.
The left-to-right, top-to-bottom direction of the edges is left implicit.
\end{example}

\begin{figure}\centering
\includegraphics[valign=c]{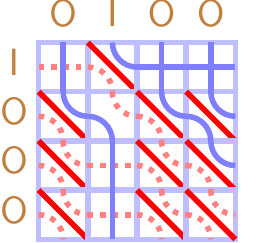}\qquad
\includegraphics[valign=c]{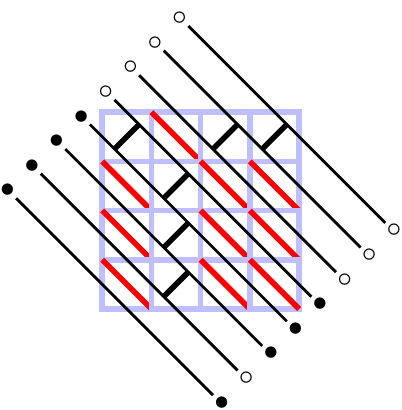}\qquad
\includegraphics[valign=c]{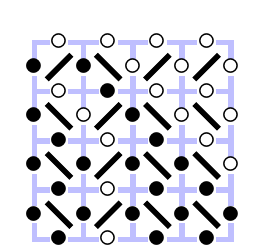}
\caption{\label{f-example}LCS grid with a sticky braid (left), 
transposition network (centre), particle evolution model (right) 
for strings $a = \textsf{``IOOO''}$, $b = \textsf{``OIOO''}$}
\end{figure}

\paragraph{Sticky braids}
The combinatorial structure of the LCS problem
is described algebraically by the \emph{Hecke monoid} (also known as the \emph{sticky braid monoid}),
which is defined similarly to the classical braid group,
but with element inversion replaced by the idempotence relation on the monoid's generators.
Given an LCS grid, strands of the corresponding sticky braid are formed by paths in the dual graph,
i.e.\ the plane graph whose nodes are the faces of the LCS grid, 
and the edges go across the edges of the LCS grid.
Multiplication of sticky braids in the Hecke monoid (also known as \emph{Demazure multiplication})
describes precisely how LCS lengths of input strings and their substrings behave under string concatenation.

\begin{example}
\Cref{f-example} (left) shows a sticky braid embedded into the LCS grid of the previous example.
The braid's strands are shown in darker blue and dotted red.
\end{example}

The connection outlined above between the LCS problem and the Hecke monoid 
has been rediscovered many times in different forms.
In particular, it underlies implicitly the algorithms for various string comparison problems
by Schmidt \cite{Schmidt:98}, Crochemore et al.\ \cite{Crochemore+:01_IPL,Crochemore+:03_SIAM}, 
Alves et al.\ \cite{Alves+:08}, Hyyr\"o \cite{Hyyro:17},
and was made explicit by Tiskin \cite{Tiskin:08_JDA,Tiskin:08_MCS,Tiskin:15_Algorithmica}.
More recently, new algorithmic applications of this connection
were found by Sakai \cite{Sakai:11,Sakai:19}, 
Tiskin \cite{Tiskin:09_CPM,Tiskin:11_CSR,Tiskin:19_WABI,Tiskin:2020_SPAA},
Gawrychowski et al.\ \cite{Gawrychowski:12}, Hermelin et al.\ \cite{Hermelin+:13}, 
Matarazzo et al.\ \cite{Matarazzo+:14}, 
Charalampopoulos et al.\ \cite{Charalampopoulos+:20,Charalampopoulos+:21}.

\paragraph{Transposition networks}
Another convenient tool for exposing the combinatorial structure of the LCS problem
comes in the form of \emph{transposition networks}.
These are a special case of comparison networks, which are a classical type of computational circuits 
studied by Batcher \cite{Batcher:68}, Knuth \cite{Knuth:98_3} and many others.
In a comparison network, input values travel on an array of parallel wires;
any prescribed pair of values can be sorted by a \emph{comparator} connecting their respective wires.
In a transposition network, an additional restriction is imposed 
that only adjacent pairs of wires can be connected by a comparator.

Given a pair of strings $a$, $b$ of lengths $m$, $n$ respectively, 
their LCS grid can be overlayed by a transposition network on $m+n$ wires,
extending diagonally from above-left to below-right and passing through the midpoints of the grid's edges.
These intersection points of the network's wires and the grid's edges will be called \emph{sites};
we will distinguish horizontal and vertical edge sites.
A wire passes through an alternating sequence of horizontal and vertical edge sites;
the \emph{value} of a given site is the value carried through it by the wire.
A cell is crossed by two wires: 
one connecting its left and bottom boundary edges, the other its top and right boundary edges.
The two sites at the cell's left and top boundary edges are its \emph{entry sites}, 
and the two sites at its right and bottom boundary edges are its \emph{exit sites}.
The network's comparators are specified as follows:
a mismatch cell always contains a comparator between the two wires that cross it,
while a match cell never contains a comparator.
A cell can therefore be of one of two \emph{types}:
``match'' (denoted `$\zvh[0]{:}$'), containing a diagonal grid edge,
and ``mismatch'' (denoted `$\zvh[1]{:}$'), containing a network's comparator;
the notation indicates the direction of the diagonal edge and of the comparator, respectively.
Occasionally, we identify cell type `$\zvh[0]{:}$' with value zero, 
and cell type `$\zvh[1]{:}$' with value one.

\begin{example}
\Cref{f-example} (centre) shows the LCS grid of the previous example,
overlaid with its respective transposition network.
\end{example}

Given an input of $m+n$ distinct values sorted in reverse order,
the set of values' trajectories through such a transposition network 
forms a sticky braid corresponding to the comparison of strings $a$, $b$;
each particular value traces a strand in this braid.
The network's output permutation provides detailed information 
about LCS lengths between various substrings of $a$, $b$.
For our purposes, the above construction can be simplified as follows:
instead of all distinct values, let the transposition network's input 
consist of $m$ ones, followed by $n$ zeros; 
note that such an input array is still sorted in reverse.
In this context, 
value zero will be called a \emph{hole} (denoted `$\circ$'),
and value one a \emph{particle} (denoted `$\bullet$').
This is done not only to distinguish the (binary) values in the network
from (also binary) string characters and (again binary) cell types,
but also to reflect in our terminology the important connection with particle interaction models,
that we will develop further in the remainder of this paper.

An assignment of values/types to a subset of sites/cells of a transposition network
will be called a \emph{configuration}.
In particular, the input configuration
formed by $m$ particles entering the LCS grid at its left boundary,
and $n$ holes entering at the top boundary, 
will be called the \emph{step initial condition}. 

\begin{example}
The transposition network in \Cref{f-example} (centre) is shown 
with the step initial condition input sequence at the top-left,
and the corresponding output sequence of particles and holes at the bottom-right.
\end{example}

The LCS length of strings $a$, $b$ is particularly easy to obtain 
from the transposition network with step initial condition:
it is equal to the number of particles among the network's $n$ outputs exiting the grid at the bottom
(equivalently, the number of holes among its $m$ outputs exiting the grid at the right).
This observation underlies implicitly the bit-parallel LCS algorithms 
of Crochemore et al.\ \cite{Crochemore+:01_IPL} and Hyyr\"o \cite{Hyyro:17},
and was made explicitly e.g.\ by Majumdar and Nechaev \cite{Majumdar_Nechaev:05}
and by Krusche and Tiskin \cite{Krusche_Tiskin:09}.
Let $a$, $b$ be of equal length $m=n$; in this case, the LCS grid has the shape of a square,
and the LCS length is equal to the number of particles (equivalently, the number of holes)
that have never crossed the grid's main diagonal.

\begin{example}
In the previous example, there are three particles among the $n=4$ outputs at the grid's bottom;
the LCS length for strings $a$, $b$ is also $3$.
In the course of the evolution of the transposition network,
$4-3=1$ particle has crossed the main diagonal from left to right;
accordingly, one hole has done so from top to bottom.
\end{example}

\section{Model $\mathit{CS}$}

The combinatorial properties of the LCS problem
allow us to reformulate the Chv\'atal--Sankoff problem
in the language of stochastic particle interaction models.
By a \emph{network evolution model}, we will understand the evolution of site values 
from a given input configuration in an infinitely wide transposition network, 
under a certain probabilistic rule that determines the type of each of the network's cells.

\paragraph{Cell dependencies}
Let $a$, $b$ now be infinite strings, where all characters are independent uniform binary random variables.
We define \emph{model $\mathit{CS}$} (the Chv\'atal--Sankoff model) as a network evolution model
where cell types are determined by character matches and mismatches between strings $a$, $b$,
as described in the previous section.
\begin{proposition}
\label{p-three-ind}
In model $\mathit{CS}$, the types of any three distinct cells are mutually independent.
The types of any three distinct cells within a $\cellfour$-shape determine uniquely the type of the fourth cell.
\end{proposition}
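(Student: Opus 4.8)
The plan is to reduce both claims to linear algebra over $\mathbb{F}_2$. Writing $a_i,b_j\in\mathbb{F}_2$ for the (independent, uniform) characters, the type of the cell at grid position $(i,j)$ is $T_{ij}=a_i\oplus b_j$, which equals $0$ exactly when the two characters match. First I would record the general principle governing such XOR-combinations of independent fair bits: since each $T_{ij}$ is individually uniform, a family $T_{i_1j_1},\dots,T_{i_kj_k}$ is mutually independent (equivalently, jointly uniform on $\mathbb{F}_2^k$) if and only if the associated vectors $v_{ij}=e^R_i+e^C_j\in\mathbb{F}_2^R\oplus\mathbb{F}_2^C$ are linearly independent over $\mathbb{F}_2$, where $e^R_i$ marks row $i$ and $e^C_j$ marks column $j$. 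This dictionary is the conceptual heart of the whole argument.

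With it in place, the independence claim is exactly the assertion that the three vectors $v_{i_1j_1},v_{i_2j_2},v_{i_3j_3}$ of any three distinct cells are linearly independent, which I would establish by ruling out each possible nontrivial dependence in turn. A single $v_{ij}$ is nonzero; and $v_{i_1j_1}\oplus v_{i_2j_2}=0$ forces $i_1=i_2$ and $j_1=j_2$, i.e.\ the two cells coincide, which is excluded. The only remaining possibility is $v_{i_1j_1}\oplus v_{i_2j_2}\oplus v_{i_3j_3}=0$, which would require the row-part $e^R_{i_1}\oplus e^R_{i_2}\oplus e^R_{i_3}$ and the column-part $e^C_{j_1}\oplus e^C_{j_2}\oplus e^C_{j_3}$ to vanish separately. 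The crux is a parity observation: a sum of three standard basis vectors over $\mathbb{F}_2$ is never zero, since vanishing demands that every index occurring does so an even number of times, whereas the total multiplicity is $3$, which is odd. Hence no dependence exists and the three types are mutually independent.

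For the second claim I would argue by direct cancellation. Labelling the four cells of a $\cellfour$-shape as $(i,j)$, $(i,j+1)$, $(i+1,j)$, $(i+1,j+1)$, their types satisfy
\[
T_{i,j}\oplus T_{i,j+1}\oplus T_{i+1,j}\oplus T_{i+1,j+1}=0,
\]
because each of $a_i,a_{i+1},b_j,b_{j+1}$ appears in exactly two of the four expressions and therefore cancels. Consequently any one of the four types is the $\mathbb{F}_2$-sum of the other three, so three of them determine the fourth uniquely.

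The genuine content sits almost entirely in the first claim, and I expect no serious obstacle beyond stating the independence-equals-linear-independence correspondence cleanly and checking that the parity argument covers all three-cell configurations at once, irrespective of whether they share rows or columns (repeated indices only help, since they leave a nonzero residual basis vector). The second claim is then immediate from the four-term cancellation, which is also precisely what makes the $\cellfour$-shape the unique minimal source of dependence among cell types.
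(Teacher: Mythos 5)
Your proof is correct and follows essentially the same route as the paper's: the paper's (very terse) proof likewise derives the first claim from the independence and uniformity of the characters, and the second from the fact that the four types in a $\cellfour$-shape must sum to zero mod~2. Your $\mathbb{F}_2$-linear-independence dictionary is simply a rigorous elaboration of what the paper dismisses as straightforward, with the parity argument on basis vectors making the three-wise case airtight.
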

\begin{proof}
The first statement is straightforward by the independence and uniformity 
of character distribution in strings $a$, $b$.
The second statement is also straightforward, since the sum of the four cells' types must be even.
\end{proof}
In particular, the types of any three cells 
adjacent in a $\cellthree$-shape are mutually independent;
we shall call this property \emph{$\cellthree$-independence}.
Note that $\cellthree$-independence relies crucially the uniform distribution of string characters,
and would not hold for a non-uniform character distribution, even if it were independent and identical.

\paragraph{Evolution}
Let strings $a$, $b$ be indexed by $i$, $j$ respectively.
The state of model $\mathit{CS}$ can be thought of as evolving in several different ways ---
vertically, horizontally or diagonally, 
with the discrete time dimension indexed by $i$, $j$ and $\frac{i+j}{2}$, respectively.
We will focus mainly on the diagonal evolution, due to its symmetry and locality properties.
The model's state under such evolution 
corresponds to an anti-diagonal doubly-infinite sequence of particle-hole values,
alternating between horizontal and vertical edge sites.
Let us index the transposition network's wires entering the grid through its top boundary 
with nonnegative integers $0, 1, 2, \ldots$,
and the wires entering the grid through its left boundary with negative integers $-1, -2, -3, \ldots$;
the count in both cases starts from the top-left cell.
A time step under diagonal evolution then consists of two half-steps:
the first involves comparators operating on pairs of adjacent sites with an odd and an even index (in that order),
the second on pairs with an even and an odd index (in that order).

As discussed in the previous section, the behaviour of model $\mathit{CS}$
reflects the LCS combinatorics of its underlying string pair $a$, $b$.
\begin{proposition}
\label{p-cs-shape}
Let $0 \leq k \leq 2n$.
Consider the prefixes of infinite strings $a$, $b$ of length $k$, $2n-k$ respectively,
and let $l$ be the LCS length of these prefixes.
Under diagonal evolution of model $\mathit{CS}$ from step initial condition after $n$ time steps,
there are $k-l$ particles at sites with indices $2n-2k$ or greater.
\end{proposition}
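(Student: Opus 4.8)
The plan is to read the claimed count off a suitable sub-grid and then transport it to the anti-diagonal state after $n$ steps by a monotonicity argument. Write $c = 2n-2k$ and let $G$ be the $k \times (2n-k)$ sub-grid determined by the length-$k$ prefix of $a$ and the length-$(2n-k)$ prefix of $b$ (rows $0 \le i \le k-1$, columns $0 \le j \le 2n-k-1$). Since every grid edge is directed rightwards or downwards, the values produced inside $G$ depend only on the cells of $G$ together with the values entering along its top and left boundaries; under the step initial condition these are exactly $2n-k$ holes from the top and $k$ particles from the left, i.e.\ the step initial condition for $G$ itself. Hence the evolution inside $G$ coincides with that of the isolated sub-grid, and the fundamental fact (LCS $=$ number of holes exiting at the right $=$ number of particles exiting at the bottom) applies: of the $k$ particles entering $G$, exactly $l$ leave through its bottom boundary and the remaining $k-l$ leave through its right boundary. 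The $k$ right-boundary sites carry the indices $c, c+1, \ldots, 2n-k-1$, so they hold precisely $k-l$ particles, all at indices $\ge c$.

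First I would record the monotonicity that drives everything. Each comparator sits in a mismatch cell and sorts its two wires so that the particle (value $1$) ends up on the higher-indexed wire, while a match cell leaves both wires unchanged. Consequently, along the diagonal evolution a particle never moves to a lower-indexed wire and a hole never to a higher-indexed one, and since the strings are infinite no value ever leaves the grid. Therefore the number of particles at indices $\ge c$ is non-decreasing in time, and a particle occupies an index $\ge c$ at a given moment if and only if it either entered at an index $\ge c$ or has crossed the diagonal $D_c \colon j-i=c$ exactly once.

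Next I would pin down when these threshold crossings can occur. A particle crosses $D_c$ only inside a mismatch cell $(i,i+c)$ lying on $D_c$; such a cell lies on the anti-diagonal $i+j = 2i+c$, hence is processed within the first $n$ time steps ($2n$ half-steps) exactly when $2i+c \le 2n-1$, i.e.\ when $i \le k-1$ --- precisely the cells of $D_c$ contained in $G$. So after $n$ steps every crossing has happened inside $G$. For $k \le n$ (so $c \ge 0$) no particle starts at an index $\ge c$, and a particle that crosses $D_c$ inside $G$ can no longer exit through the bottom (whose indices are all $< c$); it must exit through the right boundary of $G$ and then, by monotonicity, stay at an index $\ge c$ forever. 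This yields a bijection between crossings and the $k-l$ right-boundary particles, so the state after $n$ steps has exactly $k-l$ particles at indices $\ge c$, as required.

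The main obstacle is exactly this alignment of two different geometries and time-scales: the quantity we control naturally lives on the L-shaped output boundary of $G$, which is read off across a whole range of half-steps, whereas the proposition asks for a count on a single anti-diagonal read at the fixed time $n$. The bridge is the conjunction of particle-index monotonicity with the timing computation above, and getting the bookkeeping of ``entered above $D_c$'' versus ``crossed $D_c$'' exactly right is where care is needed. The complementary range $k \ge n$ (where $c<0$ and some particles start above $D_c$) I would not treat directly, but reduce to the case just proved by the symmetry $a \leftrightarrow b$, i.e.\ reflection of the grid across its main diagonal, which interchanges particles with holes and sends each wire index $w$ to $-1-w$; under this reflection the statement for $k$ becomes the already-proved statement for $2n-k \le n$.
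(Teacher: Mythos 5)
Your proposal is correct in substance, and it is worth noting that the paper does not actually prove this proposition at all: it dismisses it as ``well-known from the combinatorial properties of LCS'' and cites Majumdar--Nechaev and Krusche--Tiskin. So your argument is a genuine, self-contained proof of what the paper outsources to the literature. You build exactly the right bridge: you take the boundary-output fact that the paper does record (LCS length $=$ number of particles exiting at the bottom of a finite grid $=$ number of holes exiting at the right), restrict the infinite quarter-plane model to the $k \times (2n-k)$ sub-grid $G$ (legitimate, since each cell's outputs depend only on its inputs and type, so $G$'s evolution under the step initial condition is self-contained), and then transport the count from $G$'s output boundary to the single anti-diagonal at time $n$ via the two observations that carry the whole proof: a particle's wire index never decreases, and every cell of the threshold diagonal $D_c$ processed within $n$ time steps (those with $2i+c \le 2n-1$, i.e.\ $i \le k-1$) lies inside $G$. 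The index bookkeeping is also right: the right boundary of $G$ sits on wires $2n-2k, \ldots, 2n-k-1$ and the bottom boundary on wires $< 2n-2k$.

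The one place that needs patching is the reduction for $k > n$. The reflection about the main diagonal (with $w \mapsto -1-w$ and particles exchanged with holes) does \emph{not} literally turn the statement for $k$ into the statement for $k' = 2n-k$: it turns ``there are $k-l$ particles at indices $\ge 2n-2k$'' into ``there are $k-l$ \emph{holes} at indices $\le 2k-2n-1$'' for the swapped string pair, whereas the already-proved statement for $k'$ reads ``there are $2n-k-l$ \emph{particles} at indices $\ge 2k-2n$.'' These two assertions about the same configuration are equivalent, but only via an extra conservation identity: writing $c'' = 2k-2n \ge 0$, the number of holes at indices $< c''$ always exceeds the number of particles at indices $\ge c''$ by exactly $c''$ (true initially, $c''$ versus $0$, and preserved because an exchange across the threshold increments both counts while every other exchange changes neither). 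This identity follows from the same monotonicity you already set up, but it must be stated. Alternatively, you can avoid the reflection entirely and run your direct argument for $c = 2n-2k < 0$: exactly $2k-2n$ particles start at indices $\ge c$, and exactly $2n-k-l$ of the particles exiting $G$ on the right have crossed $D_c$ inside $G$, again totalling $k-l$.
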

\begin{proof}
Well-known from the combinatorial properties of LCS; see e.g.\ \cite{Majumdar_Nechaev:05,Krusche_Tiskin:09}.
\end{proof}

\begin{example}
\Cref{f-example} (right) shows the evolution of model $\mathit{CS}$ from step initial condition
on strings $a$, $b$ of the previous examples.
Wires with negative (respectively, nonnegative) indices are those below (respectively, above)
the network's main diagonal.
Let $n = k = 4$.
The LCS length of the input strings, 
regarded as prefixes of length $k = 2n-k = 4$ of a pair of infinite strings, is $l=3$; 
as before, we note that after $n = 4$ time steps, exactly $n-l = 4-3 = 1$ particle 
has crossed over the main diagonal to wires with nonnegative indices.
\end{example}

\paragraph{Duality}

The definition of model $\mathit{CS}$
is symmetric with respect to the reflection of the network about its main diagonal.
A pair of configurations will be called \emph{dual}, if one of them is obtained from the other
by a reflection about an above-left to below-right axis 
(exchanging the directions towards below-left and above-right),
with simultaneous exchange of sites' values between particles and holes.
In particular, the step initial condition is a self-dual configuration.

In the remainder of this paper, we will consider model $\mathit{CS}$ with step initial condition.
Our analysis will concentrate on the model's behaviour in a small neighbourhood of the main diagonal, 
where the particle and hole densities should be asymptotically equal by symmetry.
Duality will help to simplify the exposition,
since in such a setting, a pair of dual configurations will have equal probabilities.

\section{Special notation}

\paragraph{Configuration probabilities}

We consider configurations of a network evolution model as random events.
The probability of an event will be denoted by its graphical representation.
Thus, $\zvh{1:} = 1 - \zvh{0:}$ represents the probability of a given vertical edge site
holding a particle, as opposed to a hole,
and $\zvh[1]{:} = 1 - \zvh[0]{:}$ represents the probability of a given cell
being of type ``mismatch'', as opposed to ``match''.

We extend this notation to represent conditional probabilities as follows.
We juxtapose the conditioning event and the conditioned event in the same picture;
the elements of the conditioning event will be highlighted in red,
while the elements of the conditioned event will be shown in the ordinary black.
For example, the probability of a given cell being of type ``mismatch'',
conditioned on the cell's left (respectively top) entry value 
being a particle (respectively, a hole), will be denoted by
$\zvh[1]{i:o} = \zvh[1]{1:0} / \zvh{1:0}$.

Some events that we consider may be forced by other events:
a forced event, conditioned on the forcing event, occurs with certainty.
We juxtapose the forcing event and the forced event in the same picture;
the elements of the forced event will be highlighted in blue,
while the elements of the forcing event will be shown in either black or red, as appropriate.
In the previous example, the cell's exit values are forced:
$\cell[1]{i:o}{:} = \cell[1]{i:o}{O:I}$.
Showing forced sub-events is a notational decoration that can formally be omitted;
however, it is meant to serve as an intuition aid,
especially so when some non-forced sub-event becomes forced in a chain of equalities.
For example, we have $\cell{:}{0:1} = \cell{0:1}{O:I} + \cell[1]{1:0}{O:I}$.

\paragraph{Annotated equalities}
Standard annotated equality $A \eqdef B$ (``$A$ is defined as $B$'') will be used to introduce new notation.
Additionally, we will use some other annotations on the equality sign, as an aid to the reader.
Notation $A \eqr B$ (``$A$ and $B$ are obtained from each other 
by reversal with an exchange of particles and holes'')
will indicate that the equality holds by the duality property of network configurations.

\paragraph{Other notation}
For brevity, we will denote $\bar z = 1-z$ for any $z$, $0 \leq z \leq 1$.
Strings in the alphabet $\brc{\circ,\bullet}$ will sometimes be treated as binary numbers;
for brevity, we will convert such numbers to decimal where appropriate.
We let $a,b \in \brc{\circ,\bullet}$ for the remainder of this paper.

\section{Scaling limits}

Informally, the \emph{scaling limit} of a particle evolution model 
is the continuous limit of the distribution of particle densities at the model's sites,
as both time and space are simultaneously scaled down at appropriate rates,
so that the magnitude of both time and space units tends to zero.
A general introduction to the theory of scaling limits 
is given e.g.\ by Kriecherbauer and Krug \cite{Kriecherbauer_Krug:10}.

\paragraph{Scalar conservation laws}

Partial differential equations (PDEs) are an indispensable tool in studying
the asymptotic behaviour of particle evolution models.
Using PDEs, one can relate the global behaviour of the model,
such as its non-stationary evolution from a given initial condition, with its local behaviour,
such as its stationary state in a small space-time region.
A classical example of such a relationship is the asymptotic behaviour 
of the continuous-time totally asymmetric simple exclusion process (TASEP) with step initial condition,
which was shown to be governed by the inviscid Burgers' equation by Rost \cite{Rost:81} 
(see also \cite{Kriecherbauer_Krug:10,Romik:14,Ferrari:18}).

In general, the scaling limit of a conservative particle model with one spatial dimension
can be associated with a \emph{scalar conservation law} (see e.g.\ \cite{Kriecherbauer_Krug:10}), 
which is a PDE of the form
\newcommand{\dd}[1]{\tfrac{\partial}{\partial #1}}
\begin{gather*}
\dd{t} y + \dd{x} f(y) = 0
\end{gather*}
where $y = y(t,x)$ is the \emph{density} function of time $t$ and the spatial dimension $x$,
representing the conserved quantity (typically, the mass of some fluid),
and $f = f(y)$ is a strictly concave smooth function of density $y$ called the (rightward) \emph{flux}.
We are particularly interested in the \emph{step} initial condition:
\begin{gather*}
y(0,x) = 
\begin{cases}
1 & x < 0\\
0 & x > 0
\end{cases}
\end{gather*}
In the language of PDEs, the step initial condition 
is a special case of the Riemann problem for a scalar conservation law.
The discontinuity of $y$ at $x=t=0$ is known as \emph{shock}.
This initial shock dissipates over time in a \emph{rarefaction wave},
governed by the equation's solution (see e.g.\ \cite{Kriecherbauer_Krug:10,Salsa:16}) 
\begin{gather*}
y(t,x) = 
\begin{cases}
(f')^{-1} (x/t) & f'(1) t \leq x \leq f'(0) t\\
y(0,x) & \text{otherwise}
\end{cases}
\end{gather*}
where $f'$ is the derivative of $f$, and superscript $-1$ denotes its functional inverse.

Since the solution scales linearly with $t$, 
it is sufficient for the analysis to consider a single time moment $t > 0$;
a natural choice is $t=1$.
Let $y(x) = y(1,x)$.
We impose further constraints $0 \leq y \leq 1$, $f(0) = f(1) = 0$,
which are natural for the interpretation of $y$ as a fluid's density.
The maximum flux $\tilde f$ is determined by $f'(y) = 0$,
and is therefore attained at density $\tilde y = (f')^{-1}(0) = y(0)$;
we will call these \emph{peak flux} and \emph{peak density}, respectively.

Recall that under the step initial condition,
all the fluid's mass is concentrated in the negative half-line at time $t=0$.
The key characteristic of the system is the amount of mass 
transported across the origin to the positive half-line by the time $t=1$,
which turns out to be precisely the peak flux:
\begin{gather*}
\int_{0}^{+\infty} y(x) dx =
\int_{0}^{f'(0)} (f')^{-1}(x) dx =
\int_{0}^{\tilde y} f'(y) dy = 
f(\tilde y) - f(0) = f(\tilde y) = \tilde f
\end{gather*}

We will call the function $1-f = \bar f$ and the value $1 - \tilde f = \bar{\tilde f}$
respectively the \emph{flux complement} function and the \emph{peak flux complement}.
A close relationship between the peak flux complement and the constant $\gamma$ of the Chv\'atal--Sankoff problem
will be exposed in the rest of this section.

\paragraph{Network model limit}

For a network evolution model, density $y$ in the above equations 
is the limiting marginal probability of a site to contain a particle (as opposed to a hole).
The flux for a model $X$ is determined as the (unconditional) probability 
that a particle and a hole are exchanged by a comparator within the cell.
This probability, as well as its complement, have a straightforward expression
in terms of marginal site probabilities:
{\small \begin{gather}
\label{eq-flux}
f^X \eqdef \cell[1]{1:0}{O:I} = \cell{1:}{0:}  = \cell{.:}{0:} - \cell{0:}{O:} = \cell{:}{0:} - \cell{0:}{:}\qquad
\bar f^X \eqdef 1 - (\cell{:}{0:} - \cell{0:}{:}) = \cell{:}{1:} + \cell{0:}{:}
\end{gather}}%
For a model evolving vertically or horizontally,
every cell is accounted for in the above expession for the flux in a given time step.
For a model evolving diagonally,
one half of the cells is accounted for in the first half-step of a time step,
and the other half of the cells in the second half-step.

For a model that has mirror symmetry of cell type probabilities about the main diagonal
(such as model $\mathit{CS}$ and all the others considered in this paper),
and that evolves diagonally from the (skew-symmetric) step initial condition, 
the site probabilities will be skew-symmetric about the main diagonal:
particle probability at a site on one side of the main diagonal 
must be equal to the hole probability at the symmetrically opposite site.
By symmetry, the peak density for such a model in the scaling limit is $\tilde y = \frac{1}{2}$,
realised in a small neighbourhood of the main diagonal.

From now on, we will consider the model's state 
in an infinitesimally small neighbourhood of the scaling limit point $t=1$, $x=0$ on the main diagonal.
At that point, both the model's peak flux and peak density are realised,
so we will write simply $y$ for $\tilde y$ and $f^X$ for $\tilde f^X$.
The peak density $y$ is composed from particle probabilities at horizontal and vertical sites,
or, symmetrically, particle and hole probabilities at just the horizontal, or just the vertical sites:
$y = u + \U = \zhv{1:} + \zhv{:1} = \zhv{1:} + \zhv{0:} = \zhv{:0} + \zhv{:1} = \frac{1}{2}$.
The evolution of the model in such a small neighbourhood
can be considered to be in a stationary state;
we will use this stationarity to derive the joint distribution for site probabilities of our models.

\paragraph{A limit for model $\mathit{CS}$}

In general, finding an explicit flux function for a particle evolution model may be difficult,
and even the convergence to a scaling limit is not guaranteed.
Fortunately, the existence of a continuous scaling limit for model $\mathit{CS}$ 
follows directly from \Cref{p-cs-shape}.
Indeed, the model's convergence at a point on the main diagonal is equivalent 
to the convergence of scaled LCS length for a pair of equally long uniformly random binary strings,
i.e.\ to the existence of constant $\gamma$.
As mentioned in the Introduction, this was established already 
by Chv\'{a}tal and Sankoff \cite{Chvatal_Sankoff:75} (see also \cite[Chapter 1]{Steele:97}).
In much the same way, the model's convergence at any other point is equivalent
to the convergence of scaled LCS length for a pair of random binary strings
with a given limiting ratio of their lengths,
which can be established by a slight modification of the same proof.

The Chv\'atal--Sankoff problem can now be reformulated 
as finding the peak flux complement $\gamma = {\bar f}^{\mathit{CS}}$ for model $\mathit{CS}$.

\section{Model $B$}
\label{s-b}

In keeping with the traditional terminology,
let us define \emph{model $B$} (the Bernoulli model) as the network evolution model,
where a cell is assigned type ``mismatch'' with a fixed probability $\p \eqdef \zvh[1]{:}$,
called the model's \emph{(jump) rate}, independently of any site values or types of any other cells
(this initial definition will be generalised later).
Intuitively, every cell tosses an independent biased coin $p$ to determine its type.

Model $B$ has been applied to the study of the Chv\'atal--Sankoff problem by 
Boutet de Monvel \cite{Boutet:99}, Majumdar and Nechaev \cite{Majumdar_Nechaev:05}, 
Priezzhev and Sch\"{u}tz \cite{Priezzhev_Schuetz:08}, Bukh and Cox \cite{Bukh_Cox:22}.
It is closely related to a classical particle model known as
the \emph{totally asymmetric simple exclusion process (TASEP)}.
The TASEP consists of an of array of sites, each occupied by a particles or a hole.
It evolves by a particle jumping at a random time into a hole on its right;
symmetrically, the hole ``jumps'' to its left to the site previously occupied by the particle.
Updates may occur in continuous time 
(classical TASEP, which we do not consider any further) or in discrete time (DT-TASEP).
Within a time step of DT-TASEP, the update policy may be parallel 
(the process also known as multi-corner growth of a Young diagram,
which we do not consider any further),
forward-sequential, backward-sequential, or sublattice-parallel.
The latter three update policies essentially only differ by a change of coordinates,
and correspond to model $B$ evolving vertically, horizontally or diagonally, respectively.
An analysis of DT-TASEP with different update policies
has been given by Rajewsky et al.\ \cite{Rajewsky_:98} and by Martin and Schmidt \cite{Martin_Schmidt:11}.
Model $B$ and DT-TASEP can be considered as a special case of the six-vertex model
analysed by Borodin et al.\ \cite{Borodin_:16},
with weights assigned according to measure $\mathcal{P}(p,0)$ defined therein.

Model $B$ and other network evolution models presented in this paper
can also be considered as special cases of stochastic cellular automata 
(see e.g.\ \cite{Mairesse_Marcovici:14,Casse:16}).
However, the simplifying ``well-mixing'' assumptions, that are usually made in that context, 
do not hold for our models.

\paragraph{Cell type probabilities}
We note that a cell's type only affects the model's behaviour when its entry pair is $\zvh{1:0}$,
distinguishing the events $\cell[0]{1:0}{I:O}$ and $\cell[1]{1:0}{O:I}$.
For any other entry pairs,
the cell's exit values are forced by the entry values and are independent of the cell's type:
the corresponding events are $\cell{0:0}{O:O}$, $\cell{0:1}{O:I}$, $\cell{1:1}{I:I}$.
In these cases, the cell's type probability $\zvh[1]{:}$
can be set differently from $p$, without affecting the model's behaviour.
Therefore, we can generalise the definition of model $B$ 
by introducing a formal dependency of a cell's type on its entry pair,
while making sure that the model's new definition 
is still invariant with respect to duality of configurations.

\begin{definition}
We say that a cell's type \emph{depends exclusively} on a set of sites' values in a given half-step,
if, conditioned on this set, it is conditionally independent of any other site values in the same half-step.
\end{definition}

We define $4 = 1 \cdot 2 + 2$ (one dual pair and two self-dual singletons) 
conditional probabilities for a cell's type,
specifying its exclusive dependence on the entry site pair:
{\small \begin{gather*}
p_0 \eqdef \zvh[1]{o:o}\eqr
p_3 \eqdef \zvh[1]{i:i}\qquad
p_1 \eqdef \zvh[1]{o:i}\qquad
p_2 \eqdef \zvh[1]{i:o}
\end{gather*}}%
The subscripts correspond to the entry pair values
being read as a two-digit binary number, bottom-left to top-right: $p_0 = p_{\circ\circ}$, etc.
Intuitively, a cell now has four biased coins $p_0$, $p_1$, $p_2$, $p_3$, 
including a dual pair $p_0 \eqr p_3$.
The cell reads its entry pair (as a binary number), and then tosses the corresponding coin to determine its type;
the combination of the cell's entry pair and its chosen type then determines the cell's exit pair.

Conditional probability $p_2$ corresponds to the rate $p$ in the original definition of model $B$,
and determines solely the model's behaviour (in particular, its flux).
We will therefore reserve the term \emph{rate} for $p_2$,
whereas the remaining conditional probabilities $p_0 \eqr p_3$, $p_1$ will be called \emph{pseudo-rates}.
These pseudo-rates do not affect the behaviour of the model,
and therefore can temporarily be left unconstrained.
This leaves us the freedom to set them later, 
in an attempt to fit model $B$ to the constraints of model $\mathit{CS}$.

\paragraph{Alternating sequences}

Our models, including model $B$, will have time-invariant distributions satisfying the following natural property.
\begin{definition}
\label{def-alt}
An \emph{alternating sequence} is a doubly-infinite sequence of (generally dependent)
particle-hole random variables $(\xi_i)$, $i \in \Int$, that is invariant with respect to
\begin{itemize}
\item a shift by $1$, mapping $i \mapsto i+1$
\item a reversal about $\frac{1}{2}$, mapping $i \mapsto -i+1$
\end{itemize}
both of these with simultaneous exchange between particles and holes.
\end{definition}
Note that both a shift and a reversal of the given type flip the parity of indices.
\Cref{def-alt} implies that an alternating sequence is also invariant 
with respect to arbitrary shifts and reversals,
where holes and particles are exchanged if and only if the parity of indices is flipped.

We consider alternating sequences of site values in a given half-step of diagonal network evolution,
identifying arbitrarily the even (respectively, odd) indices of the sequence 
with the horizontal (respectively, vertical) edge sites.
Annotated equality $A \eqr B$, when applied to such sequences, will stand for 
``$A$ and $B$ are obtained from each other by a parity-exchanging reversal 
with a simultaneous exchange between particles and holes'';
this is consistent with the previous usage of this notation to express duality of configurations.
Furthermore, annotated equality $A \eqs B$ will have similar meaning, but with a reversal replaced by a shift.
Notation $A \eqsr B$ will be used when $A \eqs B$ and $A \eqr B$ are both applicable.

We denote the marginal site probabilities by
{\small \begin{gather}
\label{eq-u}
\u \eqdef  \zvh{0:} \eqsr \zvh{:1} \qquad
\U \eqjda \zvh{1:} \eqsr \zvh{:0}
\end{gather}}%
Substituting \eqref{eq-u} into \eqref{eq-flux}, 
we obtain a simple expression for the peak flux complement of our models.
\begin{proposition}
Let $X$ be a network evolution model in a stationary state, 
where the time-invariant distribution of site values is given by an alternating sequence.
Then the peak flux complement is
{\small \begin{gather*}
{\bar f}^X = \cell{:}{1:} + \cell{0:}{:} = \u + \u = 2\u
\end{gather*}}%
\end{proposition}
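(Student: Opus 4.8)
The plan is to obtain the result directly from the flux-complement formula \eqref{eq-flux} and the marginal definition \eqref{eq-u}, using only stationarity together with the symmetries encoded in an alternating sequence. Observe first that \eqref{eq-flux} already supplies the opening equality of the proposition at no cost: by definition $\bar f^X = \cell{:}{1:} + \cell{0:}{:}$, so there is nothing left to derive about the \emph{form} of the flux complement. The entire content is therefore to show that each of these two pictured probabilities equals the single marginal $\u$.

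First I would read each summand as a one-site marginal by unwinding the graphical notation. The picture $\cell{:}{1:}$ constrains only the cell's bottom exit site, which is a horizontal edge site, to carry a particle, while leaving the cell type and every other site free; it is therefore the marginal probability that a horizontal site holds a particle. Symmetrically, $\cell{0:}{:}$ constrains only the cell's left entry site, a vertical edge site, to carry a hole, and so it is the marginal probability that a vertical site holds a hole.

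Next I would pass from these cell-local pictures to the global marginals. Because the model is in a stationary state and its site-value law is an alternating sequence, the incoming and outgoing anti-diagonals share the same law, all horizontal sites have one common particle marginal, and all vertical sites have one common hole marginal; in particular $\cell{:}{1:} = \zvh{:1}$ and $\cell{0:}{:} = \zvh{0:}$. Definition \eqref{eq-u} then closes the argument in one line, since it asserts $\zvh{0:} = \zvh{:1} = \u$; substituting gives $\bar f^X = \u + \u = 2\u$.

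The only step that deserves genuine care is this last identification, which I would flag rather than treat as routine. The equality $\zvh{0:} = \zvh{:1}$ is not a tautology but precisely the $\eqsr$ relation of \eqref{eq-u}: a shift by one position in an alternating sequence interchanges horizontal and vertical sites (flipping index parity) while exchanging particles and holes, so the hole-probability at a vertical site is forced to equal the particle-probability at a horizontal site, their common value being $\u$. Everything else reduces to bookkeeping in the graphical probability notation, so I anticipate no real obstacle beyond keeping the horizontal/vertical and entry/exit distinctions straight.
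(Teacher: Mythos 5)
Your proof is correct and takes essentially the same route as the paper, which obtains this proposition simply by substituting \eqref{eq-u} into \eqref{eq-flux}; your write-up just makes explicit the two ingredients the paper leaves implicit, namely that stationarity identifies the exit-site marginal $\cell{:}{1:}$ with the entry-site marginal $\zvh{:1}$, and that the shift-with-exchange invariance of an alternating sequence is what makes $\zvh{0:} = \zvh{:1} = \u$ in \eqref{eq-u}. No gap.
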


In the rest of this section and the next, we designate $u$, $p_0$, $p_1$, $p_2$ as the \emph{main variables};
our goal is to connect them by a system of polynomial equations with integer coefficients.
In principle, this could be done directly in terms of the main variables alone;
however, for convenience, we will be introducing some \emph{auxiliary variables}.
Every auxiliary variable will have a separate equation 
expressing it in terms of previously introduced variables;
thus, auxiliary variables will not add any degrees of freedom to the system,
and could easily be eliminated from it, at the expense of making the equations more cumbersome.

\paragraph{AB sequences}
We first consider the most basic special case of an alternating sequence.
\begin{definition}
\label{def-ab}
An alternating sequence $(\xi_i)$, $i \in \Int$, is an \emph{AB (alternating Bernoulli) sequence},
if all its elements are mutually independent.
\end{definition}
In particular, an AB sequence of site values in a given half-step of a network evolution model
is a product measure with marginal site probabilities \eqref{eq-u}.

\paragraph{Time invariance}
Consider the evolution of model $B$ on an AB sequence in a stationary state.
The model's rate and the site densities of the sequence are connected by the \emph{time-invariance equation}:
{\small \begin{gather}
\label{eq-BI}
u u = \zhv{1:0} = \cell[0]{1:0}{I:O} = \U\U \P_2
\end{gather}}%
We recall a well-known result on the time-invariant distribution for the diagonal evolution of model $B$
(see e.g.\ Rajewsky et al.\ \cite{Rajewsky_:98}, Martin and Schmidt \cite{Martin_Schmidt:11}).
\begin{theorem}
\label{th-b-inv}
An AB sequence with parameter $u$ determined by \eqref{eq-BI}
is a time-invariant distribution for model $B$ with a given rate $p_2$.
\end{theorem}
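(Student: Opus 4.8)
The plan is to show that a single diagonal half-step of model $B$ maps an AB sequence with parameter $\u$ satisfying \eqref{eq-BI} to an AB sequence with the same parameter; since a time step is the composition of two structurally identical half-steps, invariance under one half-step yields \Cref{th-b-inv}. I would begin by recording the local update. By the discussion of cell type probabilities in \Cref{s-b}, the sole entry pair whose exit is not forced is the one with a particle at the left entry site and a hole at the top entry site: such a cell passes its entries straight through with probability $\P_2$ (the event $\cell[0]{1:0}{I:O}$) and swaps them with probability $\p_2$ (the event $\cell[1]{1:0}{O:I}$), whereas for each of the remaining entry pairs the exit is forced and independent of the cell's type ($\cell{0:0}{O:O}$, $\cell{0:1}{O:I}$, $\cell{1:1}{I:I}$). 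Thus every cell realises a map sending its two entry-site values to its two exit-site values that depends only on those values and on one independent coin of bias $\p_2$.

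Next I would use the lattice structure to reduce invariance to a single cell. In a given half-step the active cells partition the current anti-diagonal into adjacent entry pairs, so each entry site feeds exactly one cell and each site of the next anti-diagonal is the exit of exactly one cell. Because the input is a product measure (\Cref{def-ab}) and the cells' coins are mutually independent and independent of the input, the exit pairs of distinct cells are automatically independent. Hence the output will again be a product measure --- and so an AB sequence --- precisely when (i) the two exit values of one cell are independent of each other and (ii) the exit marginals match \eqref{eq-u}. This is the heart of the matter: both conditions must be extracted from one $2 \times 2$ computation, and it is independence, not merely the marginals, that will pin down $\u$.

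For that computation I would supply a single cell with independent entry values having the AB marginals \eqref{eq-u} together with its rate-$\p_2$ coin, and expand the joint law of the exit pair (bottom, right) by summing over the four entry pairs and the two coin outcomes using the transition events above. The four exit probabilities then factor into the product of the required site marginals exactly when $\u$ and $\p_2$ obey \eqref{eq-BI}; the very same relation forces the bottom (a horizontal site) and the right (a vertical site) to carry the particle probabilities prescribed by \eqref{eq-u}. In short, \eqref{eq-BI} is precisely the condition under which the single-cell map transports a product measure with parameters $(\u, \U)$ to a product measure with the parity-swapped parameters.

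With \eqref{eq-BI} in force I would assemble the pieces: each cell emits an independent exit pair with the correct marginals, distinct cells are independent, and therefore the full output anti-diagonal is a product measure whose horizontal and vertical sites carry particle probabilities $\U$ and $\u$ --- the same AB sequence, modulo the shift-with-exchange symmetry of \Cref{def-alt} induced by advancing to the next anti-diagonal. I expect the main obstacle to be condition (i): a priori the comparator correlates the bottom and right exit values, and only for the special density $\u$ determined by \eqref{eq-BI} does this correlation vanish and leave a genuine product measure. The marginal check (ii) and the cross-cell independence are comparatively routine bookkeeping once the single-cell factorisation is established.
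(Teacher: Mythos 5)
Your proposal is correct and takes essentially the same route as the paper's own proof: the paper likewise reduces to a single half-step, notes that exit values landing in different cells are trivially independent, and observes that independence of the two exit sites of one and the same cell is exactly the content of \eqref{eq-BI}. Your single-cell joint-law computation and marginal check merely spell out in detail what the paper's terse argument leaves implicit.
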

\begin{proof} 
It is sufficient to show that the AB property is preserved in a single half-step of the evolution of model $B$.
The independence between site values $a$, $b$
at the end of the half-step in a configuration $\zvh{a:b}$ is obvious,
since these values are obtained in different cells;
independence in a configuration $\zhv{a:b}$ is established by \eqref{eq-BI}.
In the equation \eqref{eq-BI},
the left-hand side expresses the AB property in a configuration at the half-step's end,
while the right-hand side relies on the same property at the half-step's beginning.
\end{proof}

\paragraph{The Arratia--Steele conjecture}
Since the marginal probabilities of cell types in model $\mathit{CS}$ are all equal to $\frac{1}{2}$,
and since these types are $\cellthree$-independent and, more generally, three-wise independent,
at some point it was quite natural to conjecture that all cell dependence
(i.e.\ $\cellfour$-dependence and, more generally, all four-wise and higher-order dependence) could also be ignored,
so that model $\mathit{CS}$ would be equivalent 
to model $B$ with $p = p_2 = \frac{1}{2}$, which we denote $B(1/2)$.
Substituting $p_2 = \frac{1}{2}$ into \eqref{eq-BI}, we obtain a quadratic equation
that gives us the peak site marginal probability and the peak flux complement for model $B(1/2)$,
which can be considered as an approximation for $\gamma$:
{\small \begin{gather*}
u = \sqrt2-1 = 0.414213\ldots \qquad 
\gamma \approx {\bar f}^{B(1/2)} = 2u = 2(\sqrt2-1) = 0.828427\ldots
\end{gather*}}%
The conjecture, attributed to Arratia by Steele \cite{Steele:86}, 
was that the above expression gives the exact value of $\gamma$.
This conjecture was disproved by the upper bound $\gamma \leq 0.826280$ due to Lueker \cite{Lueker:09}.
 
In the remainder of this paper, we will be making repairs to the Arratia--Steele conjecture
by weakening the claimed type of model equivalence (local instead of global equivalence),
and by replacing
model $B$ by a network evolution model from a more general class.

\section{Local fitting of model $B$ to model $\mathit{CS}$}
\label{s-b-fit}

The Arratia--Steele conjecture makes an unsuccessul attempt 
to fit model $B(1/2)$ to model $\mathit{CS}$.
The next natural step is to replace model $B(1/2)$ by model $B$ with a general rate and pseudo-rates.
In doing so, we can set a rate $p_2 > \frac{1}{2}$ 
for a better fit to the higher peak rate of model $\mathit{CS}$.
We need to compensate for that by lowering (at least one of) the pseudo-rates $p_0 \eqr p_3$, $p_1$,
so that the marginal cell type probability remains at $\frac{1}{2}$,
and the $\cellthree$-independence of cell types is maintained.
Crucially, we do not need to require that the models agree across the whole network:
since we are only interested in obtaining the peak flux complement,
we only need to achieve the models' agreement in a small neighbourhood of the main diagonal
(recall that the peak flux is precisely the flux across the main diagonal);
we call this \emph{local fitting} of the models.
This attempt to obtain a local fit for model $B$ to model $\mathit{CS}$ 
will eventually turn out to also be unsuccessful, 
but somewhat less so than the Arratia--Steele conjecture: 
it will give us a better approximation for $\gamma$,
and is designed to provide a stepping stone to future, still more general models.

For convenience, we introduce a pair of auxiliary variables 
for cell type probabilities conditioned on a single entry site,
where the sites in a given half-step are known to form an AB sequence:
{\small \begin{gather*}
\q_0 \eqdef \zvh[1]{o:} = \zvh[1]{o:0} + \zvh[1]{o:1} = \U p_0 + \u p_1 \eqr
\q_{\bar 1} \eqjda \zvh[1]{:i}\\
\q_1 \eqdef \zvh[1]{i:} = \zvh[1]{i:0} + \zvh[1]{i:1} = \U p_2 + \u p_3 \eqr
\q_{\bar 0} \eqjda \zvh[1]{:o}
\end{gather*}}%

\paragraph{Reverse cell type probabilities}
Forgetting temporarily about cell types, 
the evolution of site values in model $B$ can be ``turned back in time''
by considering a natural reverse process,
where site values in half-step $t$ are conditioned on site values (without cell types) in half-step $t+1$.
Both the forward and the reverse processes on site values can be described symmetrically as
{\small \begin{gather*}
1 = \cell{o:o}{O:O} = \cell{O:O}{o:o} = \cell{i:i}{I:I} = \cell{I:I}{i:i} = 
    \cell{o:i}{O:I} = \cell{I:O}{i:o}\qquad
\p_2 = \cell{i:o}{0:1} = \cell{1:0}{o:i}\qquad
\P_2 = \cell{i:o}{1:0} = \cell{0:1}{o:i} 
\end{gather*}}%
Although it is not required for our results, it is remarkable, and not difficult to check via $\eqref{eq-BI}$,
that in the stationary state, it is impossible to distinguish probabilistically 
whether a given configuration of site values has been obtained by the forward or by the reverse process.

Reintroducing cell types breaks the symmetry between the forward and the reverse processes:
a cell's type determines its operation in the forward process only.
In the reverse process, a cell's type depends exclusively on its pair of exit sites
(here, the terminology ``exit sites'' is still relative to the forward process).
We denote the $4 = 1 \cdot 2 + 2$ resulting reverse conditional probabilities by auxiliary variables
{\small \begin{gather*}
\r_{ab} \eqdef \zhv[1]{\textcond{a}:\textcond{b}} \eqr
\r_{\bar b \bar a} \eqjda \zhv[1]{\textcond{\bar b}:\textcond{\bar a}}
\end{gather*}}%
which are determined by the model's parameters via the equations
{\small \begin{gather}
\R_0 = \zhv[0]{o:o} = \cell[0]{o:o}{O:O} = \P_0\eqr
\R_3 = \zhv[0]{i:i} = \cell[0]{i:i}{I:I} = \P_3 \qquad
\R_1\U\U = \zhv[0]{0:1} = \cell[0]{0:1}{O:I} = u u \P_1 \qquad
\R_2 = \zhv[0]{i:o} = 1
\end{gather}}%

\paragraph{Total probability}
An individual cell in model $\mathit{CS}$ 
takes its types $\zvh[0]{:}$ and $\zvh[1]{:}$ equiprobably.
Therefore, in a local fit of the models,
the site probabilities, rate and pseudo-rates of model $B$ must satisfy the total probability equation,
where the pseudo-rates $p_0$, $p_1$ fulfill their purpose of balancing out the bias in the rate $p_2$:
{\small \begin{gather}
\label{eq-BT}
\U\U p_2 + 2\u\U p_0 + u u p_1 = 
\zvh[1]{1:0} + 2\:\zvh[1]{0:0} + \zvh[1]{0:1} = 
\zvh[1]{:} = \textstyle\frac{1}{2}
\end{gather}}%
Here, we have collected an equiprobable dual pair of terms $\zvh[1]{0:0} \eqr \zvh[1]{1:1}$ 
into a single term $2\:\zvh[1]{0:0}$.
The remaining terms are self-dual singletons.
We shall use the same shortcut subsequently without special notice;
the collected mutually dual terms can always be distinguished by the leading coefficient 2.

\paragraph{Linking the models}
We now attempt to link models $B$ and $\mathit{CS}$.
Our goal is to assign the rate and pseudo-rates for model $B$ so that the site values and cell types 
at any given half-step would be probabilistically indistinguishable in both models.

In model $B$, a cell's type in half-step $t$ depends exclusively on its entry site pair.
Each site of this pair is an exit site for one of a pair 
of diagonally adjacent cells in half-step $t-1$, 
and depends exclusively on those cells' entry site pairs;
we thus have an exclusive dependence of a cell's type in half-step $t$
on a quadruple (two disjoint pairs) of adjacent sites in half-step $t-1$.
Each site of this quadruple, in its turn, is an exit site for one of a triple
of diagonally adjacent cells in half-step $t-2$, 
and depends exclusively on those cells' entry site pairs;
we thus have an exclusive dependence of a cell's type in half-step $t$
on a sextuple (three disjoint pairs) of adjacent sites in half-step $t-2$.
On the other hand, in model $\mathit{CS}$, a cell's type in half-step $t$
is determined uniquely by just the types of three preceding cells,
two in half-step $t-1$ and one in half-step $t-2$,
forming a $\cellthree$-shape between themselves, and a $\cellfour$-shape together with the current cell.
Therefore, in order to relate models $B$ and $\mathit{CS}$,
a system of polynomial equations can be obtained by listing exhaustively 
all possible configurations of the relevant site values and cell types
over three half-steps of both models' evolution.
The use of duality and of the reverse cell type probabilities 
will provide substantial shortcuts for such an exhaustive enumeration,
helping us to avoid listing dozens of configurations explicitly.

There are three linking equations: one for the rate $p_2$, 
and two (considering duality) for the pseudo-rates $p_0 \eqr p_3$, $p_1$.
The left-hand side of every equation represents 
a single half-step configuration for a given rate or pseudo-rate.
The right-hand side enumerates exhaustively each of the three half-step configurations of model $\mathit{CS}$
that result in the configuration in the left-hand side with nonzero probability.
Due to $\cellthree$-independence of cell types, the first two of these steps
are probabilistically indistinguishable from ones of model $B$,
and their probabilities are assigned accordingly in the equations;
by using the reverse probabilities, we avoid an explicit enumeration of the site values in the first half-step,
while we do enumerate cell types.
In the third half-step, the cell type is determined uniquely 
from the $\cellfour$-configuration of model $\mathit{CS}$;
the requirement that this half-step must also be probabilistically indistinguishable from one of model $B$
provides the desired equation.

In particular, the linking equation for the rate $p_2$ is as follows:
\begin{subequations}
\label{eq-BS}
\def\phan{\phantom{\u\U \p_0}}
{\small \begin{gather}
\mathrlap{\U\U \p_2}\phan = \zvh[1]{1:0} = 
\Bigpa{
\animal[x]{1:1:1:I}{:::}{:1:0:}{:I:O:} +
2\:\animal{0:1:0:I}{:::}{:1:0:}{:I:O:} +
\animal[x]{1:0:0:I}{:::}{:1:0:}{:I:O:}} +
2\:\Bigpa{
\animal{1:1:1:I}{:::}{1:0:0:}{:I:O:} +
\animal{0:1:0:I}{:::}{1:0:0:}{:I:O:}} + 
\animal{1:1:1:I}{:::}{1:0:1:0}{:I:O:} \\
\notag\phan = 
2 \u\u \R_2\q_0\Q_0 + 
2\U\U\u (\r_0\p_2\q_0 + \R_0\p_2\Q_0) +
\U\U\U\U \r_1\p_2\p_2
\end{gather}}%
The terms representing impossible events have been crossed out and dropped from the equation;
from now on, such terms will be omitted without special notice.

The remaining linking equations are as follows:
{\small \begin{gather}
\mathrlap{\u\U \p_0}\phan = \zvh[1]{0:0} = 
\Bigpa{
\animal{0:0:1:I}{:::}{:0:0:}{:O:O:} +
\animal{1:0:0:I}{:::}{:0:0:}{:O:O:}} +
\Bigpa{
\animal{1:1:1:I}{:::}{0:0:0:}{:O:O:} +
\animal{0:1:0:I}{:::}{0:0:0:}{:O:O:}} +
\animal{1:1:1:I}{:::}{0:0:1:0}{:O:O:} +
\animal{0:0:1:I}{:::}{:0:1:0}{:O:O:}\\
\notag\phan = 
\U\u (\R_0\Q_1\q_0 + \r_0\Q_1\Q_0)+
\u\U\u (\r_0\p_0\q_0 + \R_0\p_0\Q_0) + 
\u\U\U\U \r_1\p_0\p_2 +
\U\U\U \R_1\Q_1\p_2\\
\mathrlap{\u\u \p_1}\phan = \zvh[1]{0:1} =
\animal{1:1:1:I}{:::}{0:0:1:1}{:O:I:} +
2\:\animal{0:1:0:I}{:::}{0:0:1:}{:O:I:} +
\animal{1:0:0:I}{:::}{:0:1:}{:O:I:} =
\u\U\U\u \r_1\p_0\p_3 +
2\u\U\U \R_1\p_0\Q_1 + 
\U\U \r_1\Q_1\Q_1
\end{gather}}%
\end{subequations}

It is important to note that the connection between the two models 
expressed by equations \eqref{eq-BS} is incomplete:
while the equations relate the rate and the pseudo-rates of model $B$ to model $\mathit{CS}$,
they do not guarantee the preservation of the AB property on the site values.
In particular, the equations' left-hand sides 
express site independence within a configuration of the form $\zvh{a:b}$, 
but none of the equations implies site independence within a configuration of the form $\zhv{a:b}$.
Thus, there is no guarantee that our goal of probabilistic indistinguishability 
between the two models has been achieved: 
in fact, it has not, and in general model $B$ turns out to be insufficient for a perfect fit.

\paragraph{Solving the equations}
The resulting system has four main variables $u$, $p_0$, $p_1$, $p_2$, 
involved in five main equations: one time-invariance equation \eqref{eq-BI},
one total probability equation \eqref{eq-BT}, 
and three linking equations \eqref{eq-BS}.
There are also some auxiliary variables, each of which is introduced via its own separate equation.
Thus, the system is overdetermined by one equation.
However, it is still consistent, since the total probability equation \eqref{eq-BT}
is a consequence of the $\cellthree$-independence of cell types,
which is implied by the time-invariance and the linking equations.
While the total probability equation is formally redundant,
we keep it in the system for its symmetry and, more importantly, 
as an aid to computer algebra software in solving the system.

Since all the variables in the system represent probabilities,
we are only interested in real solutions between $0$ and $1$;
we call such solutions \emph{admissible}.
The system has a unique admissible solution;
we denote by $B(\mathit{opt})$ model $B$ with the specific set of parameters provided by this solution.

The system's admissible solution can be obtained analytically using computer algebra software.
In particular, Mathematica returns it instantly, expressed in exact radicals 
(for this, function \texttt{Solve} needs to be used with option \texttt{Quartics -> True}).
As a result, we obtain the site marginal probability 
and an estimate for $\gamma$ via the peak flux complement as
{\small \begin{gather*}
\textstyle 
u = \sqrt{\frac{7}{3}}-\sqrt{\frac{23-5 \sqrt{21}}{6}}-1 = 0.407025\ldots \qquad
\gamma \approx {\bar f}^{B(\mathit{opt})} = 2 u = 0.814050\ldots
\end{gather*}}%
and the model's rate and pseudo-rates as
{\small \begin{gather*}
p_0 = p_3 = -\tfrac{8}{3} + \tfrac{49}{6} u - uu - \tfrac{1}{2} uuu = 0.457987\ldots\\
p_1 = \tfrac{29}{2} - 51 u + \tfrac{75}{2} uu + 9 uuu = 0.561206\ldots \\
p_2 = -\tfrac{2}{3} + \tfrac{34}{3} u - 19 uu - 4 uuu = 0.528838\ldots
\end{gather*}}%

We have thus attempted to obtain a local fit of model $B$ to model $CS$,
expressing the various constraints of the latter by polynomial equations with integer coefficients,
and obtaining the unique admissible solution of the resulting equation system as model $B(\mathit{opt})$.
However, this fit is not perfect, since the AB sequence property is not preserved;
enforcing its preservation by introducing additional equations 
would make the system truly overdetermined and inconsistent.
We conclude that not only model $B(1/2)$ of the Arratia--Steele conjecture,
but even the more general model $B$
is still too rigid to provide a perfect local fit to model $CS$.

\section{Conclusion}

In this paper, we have linked the Chv\'atal--Sankoff problem
to the parameters of a certain stochastic particle process (model $B$),
using existing results on the combinatorial structure of the LCS problem
and the theory of continuous scaling limits for discrete particle processes.
Model $B$ generalises the Bernoulli model that has been used to obtain 
previous estimates for the Chv\'atal--Sankoff constant $\gamma$.
We have obtained a system of polynomial equations with integer coefficients
that determines the parameters for this process.
The system is solvable in exact radicals,
and provides an estimate for the Chv\'atal--Sankoff constant
that is substantially better than the one obtained from the original Bernoulli model.
At the same time, our technique differs from the Monte Carlo simulations
used previously to obtain numerical approximations for $\gamma$,
and complements the insights obtained from those simulations. 

We anticipate that still better estimates can be obtained 
with even more general stochastic particle models with more parameters.

\bibliography{research}

\end{document}